\theoremstyle{plain}% 
\newtheorem{theorem}{Theorem}
\newtheorem{proposition}[theorem]{Proposition}
\newtheorem{example}[theorem]{Example}
\newtheorem{corollary}[theorem]{Corollary}
\newtheorem{remark}[theorem]{Remark}
\newcommand{\PP}{\mathbb{P}}
\date{}
\begin{document}

\title{Geometry of Higher-Order Markov Chains}

\author{Bernd Sturmfels\footnote{{\em Department~of Mathematics,
University of California at Berkeley, Berkeley, CA 94720, USA,}
{\tt bernd@math.berkeley.edu}. \ \ \
This research project was supported in part by the
 National Science Foundation
(DMS-0968882) and the DARPA Deep Learning program (FA8650-10-C-7020).}
}

\maketitle

 \begin{abstract}
 \noindent
We determine an explicit Gr\"obner basis,
consisting of linear forms and determinantal quadrics,
for the prime ideal of Raftery's mixture transition distribution model for Markov chains.
When the states are binary, the corresponding projective 
variety is a linear space, the model  itself consists
of two simplices in a cross-polytope, and the likelihood function
typically has two local maxima.
 In the general non-binary case, the model
corresponds to a cone over a Segre variety.
\end{abstract}

\section{Introduction}

In this note we investigate Adrian Raftery's
{\em mixture transition distribution model} (MTD)
from the perspective of algebraic statistics \cite{LAS, ascb}.
The MTD model, which was first proposed in \cite{Raf},
has a wide range of applications in engineering and the sciences \cite{RT}.
The article by Berchtold and Raftery \cite{BR} offers a detailed introduction and review.

The point of departure for this project was a conjecture due to
Donald Richards \cite{Ric}, stating that the
likelihood function of an MTD model can have multiple local maxima.
We establish this conjecture for the case of binary states
in Proposition \ref{thm:twolocal}. 

Our main result, to be
derived in Section 4, gives an explicit Gr\"obner basis
for the MTD model. Here, both the sequence length and the number of states are arbitrary.

We begin with an algebraic description of the model in \cite{BR, Raf}.
Fix a pair of positive integers $l$ and $m$,
and set $N = m^{l+1}-1$. We define
the statistical model ${\rm MTD}_{l,m}$ whose state space is the set
$[m]^{l+1}$ of sequences $i_0 i_1 \cdots i_l$ of length $l+1$
over the alphabet $[m] = \{1,2,\ldots,m\}$.
The model has $(m-1)m + l-1$ parameters, given
by the entries of an $m \times m$-transition matrix  $(q_{ij})$
and a probability distribution $\lambda = (\lambda_1,\ldots,\lambda_l)$ on
the set $[l] = \{1,2,\ldots,l\}$ of the hidden states.  Thus the parameter space is
the product of simplices $\,(\Delta_{m-1})^m \times \Delta_{l-1}$. 
The model ${\rm MTD}_{l,m}$ will be a semialgebraic subset
of the simplex $\Delta_N$. That simplex has its coordinates
$p_{i_0 i_1 \cdots i_l}$ indexed by sequences in $[m]^{l+1}$.

The model ${\rm MTD}_{l,m}$ 
 is the image of the bilinear map
$$ \phi_{l,m} \,: \, \,(\Delta_{m-1})^m \times \Delta_{l-1} \,
\rightarrow \, \Delta_N $$
which is defined by the formula
\begin{equation}
\label{eq:param} p_{i_0 i_1 \ldots i_{l-1} i_l} \quad = \quad
\frac{1}{m^{l}} \cdot \sum_{j=1}^{l} \lambda_j q_{i_{j-1},i_l} 
\end{equation}
As is customary in algebraic statistics, we
pass to a simpler object of study
by considering the Zariski closure
$\overline{{\rm MTD}}_{l,m}$ of our model in the
complex projective space $\PP^N$, and we seek to
compute the homogeneous prime ideal of all polynomials
in the $N+1$ unknowns $p_{i_0 i_1 \ldots i_l}$ that vanish on
$\overline{{\rm MTD}}_{l,m}$.
This  particular goal will be reached in our Theorem \ref{thm:main}.

The following probabilistic interpretation of 
the formula $(\ref{eq:param})$ makes it evident that 
$\sum p_{i_0 i_1 \cdots i_l} = 1$ holds on the image of $\phi_{l,m}$.
We generate a sequence of length $l+1$ on $m$ states as follows.
First we select from the uniform distribution on all
$m^l$ sequences $i_0 i_1 \cdots i_{l-1}$ of length $l$.
All that remains is to  determine the
state  $i_l$ in position $l$. The mixture distribution $\lambda$ 
determines which of the earlier states
gets used in the transition. With probability 
$\lambda_j$, we select position $j-1$ for that.
The character in the last position $l$ is determined
from the state $i_{j-1}$ in position $j-1$ using the transition matrix $(q_{ij})$.

The model ${\rm MTD}_{l,m}$ is known to be identifiable \cite[\S 4.2]{BR}.
Consequently, the dimension of the projective variety $\overline{{\rm MTD}}_{l,m}$ 
is equal to the number $(m-1)m + l-1$ of model parameters.
A geometric characterization of this variety 
will be given in Corollary \ref{cor:segre}.

Equations defining Markov chains and Hidden Markov Models
have received considerable attention in algebraic statistics \cite{Cri, HT, HDY, Sch}.
We contribute to this literature by studying the algebraic geometry
 of a fundamental model for higher order Markov chains.
In addition to our theoretical results in
Theorems \ref{thm:eins} and \ref{thm:main},
readers from statistics will find in Section 3 an analysis
of the behavior of the EM algorithm for binary MTD models.

\section{Binary States}

Our first result concerns the geometry of the model 
in the case $m=2$ of binary states.

\begin{theorem}
\label{thm:eins}
The variety $\,\overline{{\rm MTD}}_{l,2}\,$
is a linear subspace of dimension $l+1$ in the projective space $\PP^N$.
This variety intersects the probability simplex $\Delta_N$ in a regular cross-polytope of
dimension $l+1$.
The model ${\rm MTD}_{l,2}$ is the union of two $(l+1)$-simplices
spanned by vertices of the cross-polytope  $\overline{{\rm MTD}}_{l,2} \cap \Delta_N$.
The two simplices meet along a common~edge.
\end{theorem}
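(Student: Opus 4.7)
The plan is to reparametrize the binary case of $(\ref{eq:param})$, read off linear equations for $\overline{{\rm MTD}}_{l,2}$, and recognize the polytope geometry. Set $a := q_{11}$, $b := q_{22}$, and for each $s = (i_0,\ldots,i_{l-1}) \in \{1,2\}^l$ introduce the indicator vector $\epsilon(s) \in \{0,1\}^l$ with $\epsilon_j(s) = 1$ iff $i_{j-1} = 1$. A direct computation rewrites $(\ref{eq:param})$ as
\[
2^l \, p_{s,1} \,=\, (1-b) \,+\, (a+b-1)\sum_{j=1}^{l} \lambda_j\,\epsilon_j(s), \qquad p_{s,2} \,=\, \tfrac{1}{2^l} \,-\, p_{s,1}.
\]
Two observations follow. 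First, $p_{s,1} + p_{s,2}$ is independent of $s$, yielding $2^l - 1$ linear forms that vanish on the image. Second, $s \mapsto p_{s,1}$ factors through an affine function on $\{0,1\}^l$, yielding a further $2^l - l - 1$ relations. These are independent and cut out a projective linear subspace $L \subset \PP^N$ of dimension $l+1$. Since identifiability, cited in the introduction, forces $\dim \overline{{\rm MTD}}_{l,2} = l+1$, we conclude $\overline{{\rm MTD}}_{l,2} = L$.

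Next, a point of $L \cap \Delta_N$ corresponds to an affine function $f\colon \{0,1\}^l \to [0,M]$ with $M = 1/2^l$. Switching to signed variables $\sigma_j = 2\epsilon_j - 1$ and centering by $M/2$, one writes $f = M/2 + \gamma_0 + \sum_{j=1}^l \gamma_j \sigma_j$. The bounds $0 \le f \le M$ at all $2^l$ sign vectors then collapse to $|\gamma_0| + \sum_j |\gamma_j| \le M/2$, using $\max_\sigma |\gamma_0 + \sum_j \gamma_j \sigma_j| = |\gamma_0| + \sum_j |\gamma_j|$. Thus $L \cap \Delta_N$ is the $\ell^1$-ball of radius $M/2$ in $\R^{l+1}$, a cross-polytope of dimension $l+1$. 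Its $2(l+1)$ vertices are the uniform distributions on the sets $\{i_l = k\}$ for $k \in \{1,2\}$ and on $\{i_{j-1} = i_l\}$ or $\{i_{j-1} \ne i_l\}$ for $j \in \{1,\ldots,l\}$; a short intersection count shows all antipodal pairs lie at Euclidean distance $\sqrt{2/2^l}$ and all other pairs at $\sqrt{1/2^l}$ in $\R^{N+1}$, confirming regularity in the ambient metric.

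Finally, the parametrization reads in these cross-polytope coordinates as $\gamma_0 = (a-b)/2^{l+1}$ and $\gamma_j = (a+b-1)\lambda_j/2^{l+1}$ for $j \ge 1$. Because $\lambda_j \ge 0$ and $a,b \in [0,1]$, the components $\gamma_1,\ldots,\gamma_l$ share the common sign of $a+b-1$, so the image of $\phi_{l,2}$ lies in the union of the two closed orthants $\{\gamma_j \ge 0\}$ and $\{\gamma_j \le 0\}$. Using that $(v,\lambda) \mapsto v\lambda$ surjects $\R_{\ge 0} \times \Delta_{l-1}$ onto $\R_{\ge 0}^l$, the $a+b \ge 1$ branch sweeps out precisely the intersection of the cross-polytope with the positive orthant, which is the $(l+1)$-simplex whose $l+2$ vertices are $\pm\tfrac{M}{2}e_0$ (from $(a,b)\in\{(1,0),(0,1)\}$) and $\tfrac{M}{2}e_j$ (from $(a,b,\lambda) = (1,1,e_j)$). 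Symmetrically, the $a+b \le 1$ branch is the $(l+1)$-simplex using $\pm\tfrac{M}{2}e_0$ and $-\tfrac{M}{2}e_j$ (attained at $(0,0,e_j)$). The two simplices meet exactly along the edge $\gamma_1=\cdots=\gamma_l=0$, corresponding to $a+b=1$, which connects $\pm\tfrac{M}{2}e_0$. The chief care is needed in this last step, namely in confirming that each branch fills its orthant-piece of the cross-polytope completely; the rest reduces to bookkeeping with the bilinear parametrization.
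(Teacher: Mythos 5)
Your proof is correct, and its skeleton matches the paper's: your two families of linear relations are exactly the paper's (\ref{eq:lin1}) and (\ref{eq:lin2}) (affineness of $s\mapsto p_{s,1}$ on the cube is what the paper phrases as the tensor having tropical rank one), and the conclusion $\overline{{\rm MTD}}_{l,2}=L$ via identifiability plus equality of dimensions of irreducible varieties is the same step. Where you genuinely diverge is in the polytope analysis, and your route has real advantages. The paper lists the $2l$ vertices $E_{rs}$ and two apexes, argues that $P$ is a bipyramid over an $l$-dimensional cross-polytope, and locates the two simplices by tracking images of vertices and fibers of the parameter polytope (contracting $a=b$ onto the diagonal $S$, etc.). You instead pass to signed coordinates and use $\max_\sigma|\gamma_0+\sum_j\gamma_j\sigma_j|=|\gamma_0|+\sum_j|\gamma_j|$ to identify $L\cap\Delta_N$ as an $\ell^1$-ball outright, and the explicit formulas $\gamma_0=(a-b)/2^{l+1}$, $\gamma_j=(a+b-1)\lambda_j/2^{l+1}$ (note your $b=q_{22}$ is $1$ minus the paper's $b=q_{21}$, so your sign condition $a+b-1\gtrless 0$ is the paper's $a\gtrless b$) exhibit the two simplices as the two orthant pieces $\{\gamma_1,\dots,\gamma_l\ge 0\}$ and $\{\gamma_1,\dots,\gamma_l\le 0\}$. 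This buys you two things the paper leaves implicit: an actual verification of metric regularity (your distances $\sqrt{2/2^l}$ for antipodal pairs and $\sqrt{1/2^l}$ otherwise are correct and pin down the configuration up to isometry), and a clean surjectivity argument for each branch, since $(a,b)\mapsto(a-b,\,a+b-1)$ maps the unit square onto the diamond $|u|+|t|\le 1$ while $t\lambda$ sweeps each slice $\{\gamma_j\ge 0,\ \sum_j\gamma_j=t/2^{l+1}\}$. The one step you flag but do not write out --- that the $a+b\ge 1$ branch fills its orthant piece --- is precisely this elementary two-line check, so you should record it to close the argument.
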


The {\em cross-polytope} is the free object in the category of centrally symmetric 
polytopes \cite{Zie}. It can be represented  as the convex hull
of all signed unit vectors $e_i$ and $-e_i$ where $i=0,1,\ldots,l$, so it is
an $(l+1)$-dimensional polytope with $2l+2$ vertices and $2^{l+1}$ facets.

Before we come to the proof Theorem \ref{thm:eins}, let us first see some examples to illustrate it. In what follows 
we abbreviate the model parameters by $q_{11} = a$, $q_{21} = b$ and $\lambda_2 = \lambda$.

\begin{example} \rm
Theorem \ref{thm:eins} also 
applies in the trivial case $l = 1$, where 
(\ref{eq:param}) reads
\begin{equation}
\label{eq:2by2}  \begin{pmatrix} p_{11} & p_{12} \\
      p_{21} & p_{22} \end{pmatrix}
      \quad = \quad
      \begin{pmatrix} a/2 & (1-a)/2 \\ b/2 & (1-b)/2 \end{pmatrix}. 
\end{equation}
      The variety $\overline{{\rm MTD}}_{1,2}$
is the plane in $\PP^3$ given by $p_{11} + p_{12} = p_{21} + p_{22}$.
Its intersection with the tetrahedron $\Delta_3$ coincides with the model 
${\rm MTD}_{1,2}$,
which is a regular square:
$$ 
{\rm MTD}_{1,2} \,\, = \,\,
\overline{{\rm MTD}}_{1,2} \cap \Delta_3 \,= \,
{\rm conv} \biggl\{ 
\begin{pmatrix} 1/2 &\! 0 \\ 1/2 &\! 0 \end{pmatrix},\,
\begin{pmatrix} 1/2 &\! 0 \\ 0 &\!\!1/2 \end{pmatrix},\,
\begin{pmatrix} 0  &\!\! 1/2  \\ 1/2 &\! 0 \end{pmatrix},\,
\begin{pmatrix} 0 &\! 1/2 \\ 0 &\! 1/2 \end{pmatrix}
\biggr\}.
$$
The first three and last three matrices in this list form the
two triangles referred to in Theorem \ref{thm:eins}.
Their common edge consists of all transition matrices (\ref{eq:2by2}) of rank~$1$.~\qed
\end{example}

\begin{example} \rm
Our first non-trivial example arises for $l=m=2$.
The map $\phi_{2,2}$ is given~by
$$
\!\! (a,b,\lambda) \mapsto
p = \frac{1}{4} \! \begin{bmatrix}
a e_{111} +
(\lambda b  + (1-\lambda) a )  e_{121} + 
(\lambda a + (1-\lambda)  b )  e_{211} + 
b e_{221} + 
(1{-}a) e_{112} \\ + 
(\lambda (1{-}b)  {+} (1{-}\lambda) (1{-}a) ) e_{122} + 
(\lambda (1{-}a) {+} (1{-}\lambda) (1{-}b )) e_{212} + (1{-}b) e_{222} 
\end{bmatrix}
$$
Here $\{e_{111}, e_{112},\ldots, e_{222}\}$ denotes the standard basis in the space
of $2 \times 2 \times 2$-tensors.
The variety $\overline{{\rm MTD}}_{2,2} $
is the $3$-dimensional linear subspace of $\PP^7$ defined by
$$
\begin{matrix}
p_{111}+p_{112} = p_{121} + p_{122}, &
p_{211}+p_{212} = p_{221} + p_{222} , \\
p_{121}+p_{122} = p_{221} + p_{222}, &
p_{111}+p_{221} = p_{121} + p_{211}.
\end{matrix}
$$
The intersection of this linear space with the simplex  $\Delta_7$
is the regular octahedron whose vertices are the images  under $\phi_{2,2}$
of the vertices of the cube $\,(\Delta_1)^2  \times \Delta_1 $.
The model ${\rm MTD}_{2,2}$ consists of two tetrahedra formed by vertices
of the octahedron. Their common edge is the segment between
$\frac{1}{4}(e_{111}+e_{121}+e_{211}+e_{221})$ and 
$\frac{1}{4}(e_{112}+e_{122}+e_{212}+e_{222})$. \qed
\end{example}

\begin{example}
\label{ex23}
 \rm
The statement of Theorem \ref{thm:eins} does not extend to $m \geq 3$.
Consider the case $l=2, m = 3$. The $7$-dimensional variety
$\overline{{\rm MTD}}_{2,3} $ lives in $\PP^{26}$,
and it is not a linear space.
The linear span of  $\overline{{\rm MTD}}_{2,3} $  is $10$-dimensional. 
Inside this $\PP^{10}$, the variety $\,\overline{{\rm MTD}}_{2,3} $
has codimension $3$, degree $4$, and  it is cut out by six quadrics.
In Example~\ref{ex:neun} we shall
display a Gr\"obner basis  consisting of $16$ linear forms
and six quadrics for its prime ideal.
\qed 
\end{example}

\begin{proof}[Proof of Theorem \ref{thm:eins}]
It is known by \cite[\S 4.2]{BR} that the model is identifiable, so 
${\rm MTD}_{l,2}$ is a semi-algebraic set of dimension $l+1$ in $\Delta_N$.
Its Zariski closure  $\overline{{\rm MTD}_{l,2}}$ is a
variety of dimension $l+1$ in $\PP^N$. That variety is irreducible because it is defined by
way of  a rational parametrization.
For any binary sequence $i_0 i_1 \cdots i_{l-1}$, the identity
\begin{equation}
\label{eq:lin1}
p_{i_0 i_1 \cdots i_{l-1} 2} \,\,=\,\,  2^{-l} -  p_{i_0 i_1 \cdots i_{l-1} 1} 
\end{equation}
holds on  ${\rm MTD}_{l,2}$, so it suffices to consider 
relations on probabilities of sequences that end with $1$.
On our model, these probabilities satisfy the linear equations
\begin{equation}
\label{eq:lin2}
p_{i_0 i_1 \cdots i_r \cdots i_s \cdots  i_{l-1} 1} + 
p_{i_0 i_1 \cdots \tilde{i}_r \cdots \tilde{i}_s \cdots  i_{l-1} 1} 
\,\,=\,\,
p_{i_0 i_1 \cdots i_r \cdots \tilde{i}_s \cdots  i_{l-1} 1} + 
p_{i_0 i_1 \cdots \tilde{i}_r \cdots i_s \cdots  i_{l-1} 1} . 
\end{equation}
In other words, the $l$-dimensional $2 {\times} 2 {\times} \cdots {\times} 2$-tensor 
$(p_{i_0 i_1 \cdots i_{l-1} 1})$ has tropical rank $1$.
The set of such tensors is a classical linear space of dimension $l+1$.

Solving the linear equations  (\ref{eq:lin1}) and (\ref{eq:lin2})
on the simplex $\Delta_N$, we obtain
an $(l+1)$-dimensional polytope $P$ that contains the model ${\rm MTD}_{l,2}$.
Its Zariski closure in $\PP^N$ is an $(l+1)$-dimensional linear space
that contains the variety $\overline{{\rm MTD}_{l,2}}$. 
Being irreducible varieties of the same dimension, they must be equal.
This proves the first assertion.

We next claim that the polytope $P$ of all non-negative real solutions
to (\ref{eq:lin1}) and (\ref{eq:lin2}) is a regular cross-polytope. 
 For $r \in \{0,1,\ldots,l-1\}$ and $s \in \{1,2\}$ define the $2l$ points
$$ E_{rs} \,\,\,=\,\,\, \frac{1}{2^l} \cdot
\biggl[ \sum \bigl\{ \,e_{i_0 i_1 \cdots i_{l-1} 1} \,| \, i_r = s \,\bigr\} \,+\,  \sum \bigl\{ \,e_{i_0 i_1 \cdots i_{l-1} 2} \,| \, i_r \not= s \,\bigr\} \biggr]  \quad \in \,\,\Delta_N . $$
These are extreme non-negative solutions of (\ref{eq:lin1}) and (\ref{eq:lin2}).
They form the vertices of an $l$-dimensional cross-polytope, since
$\,\frac{1}{2}(E_{r1} + E_{r2}) \,$ is equal to the uniform distribution $\, \frac{1}{2^{l+1}} e_{++\cdots++}\,$ for all $r$.
In addition to the $2l$ vertices $E_{rs}$, the polytope $P$ has two more vertices, namely,
$\,\frac{1}{2^l} e_{++\cdots+1}\,$ and $\,\frac{1}{2^l} e_{++\cdots+2}$.
Hence $P$ is a bipyramid over the $l$-dimensional cross-polytope, so 
it is an $(l+1)$-dimensional cross-polytope.

It remains to identify the model ${\rm MTD}_{l,2}$ inside $P$.
The parameter polytope is the product $(\Delta_1)^2 \times \Delta_{l-1}$, and,
as before, we chose coordinates $(a,b)$ on the square $(\Delta_1)^2$.
The map $\phi_{l,2}$ contracts the simplex $\{(0,0)\} \times \Delta_{l-1}$
onto the vertex  $\,\frac{1}{2^l} e_{++\cdots+2}$ of $P$, and it contracts
the simplex $\{(1,1)\} \times \Delta_{l-1}$ onto the vertex $\,\frac{1}{2^l} e_{++\cdots+1}$ of $P$.
The vertex $(0,1) \times e_r$ is mapped to the vertex $E_{r,2}$,
and the vertex $(1,0) \times e_r$ is mapped to the vertex $E_{r,1}$.
The parameter points with $a = b$ are contracted onto the line segment
$S  = [\frac{1}{2^l} e_{++\cdots+1},\frac{1}{2^l} e_{++\cdots+2}]$.
The parameter points with $a < b$ are mapped bijectively onto 
the $(l+1)$-simplex formed by $S$ and $\{E_{0,2}, E_{1,2}, \ldots, E_{l-1,2}\}$, but with $S$ removed.
The parameter points with $a > b$ are mapped bijectively onto 
the $(l+1)$-simplex formed by $S$ and $\{E_{0,1}, E_{1,1}, \ldots, E_{l-1,1}\}$, but with $S$ removed.
Hence ${\rm MTD}_{l,2}$ equals the union of two $(l+1)$-simplices
glued along the special diagonal $S$ of the cross-polytope $P$.
\end{proof}

\begin{corollary}
For large $l$, there are far fewer distributions in the model
${\rm MTD}_{l,2}$
than  distributions in its Zariski closure.
Namely, with respect to Lebesgue measure, we have
$$ \frac{{\rm vol}({\rm MTD}_{l,2})}{
{\rm vol}(\overline{{\rm MTD}}_{l,2} \cap \Delta_N)}
\,\, = \,\,\frac{1}{2^{l-1}}. $$
\end{corollary}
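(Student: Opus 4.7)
The plan is to compute the volume ratio directly, using Theorem \ref{thm:eins} to describe both $P := \overline{{\rm MTD}}_{l,2} \cap \Delta_N$ as a regular $(l+1)$-cross-polytope and ${\rm MTD}_{l,2}$ as a union of two $(l+1)$-simplices. Recall from the proof of Theorem \ref{thm:eins} that the $2l+2$ vertices of $P$ form $l+1$ antipodal pairs: the pair $(v_1, v_2)$ with $v_s = 2^{-l} e_{+\cdots + s}$, together with the pairs $(E_{r,1}, E_{r,2})$ for $r = 0, 1, \ldots, l-1$. Moreover ${\rm MTD}_{l,2} = \Sigma^{(1)} \cup \Sigma^{(2)}$, where $\Sigma^{(s)} = {\rm conv}(v_1, v_2, E_{0,s}, \ldots, E_{l-1,s})$, and the intersection $\Sigma^{(1)} \cap \Sigma^{(2)} = [v_1, v_2]$ has $(l+1)$-dimensional Lebesgue measure zero. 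Hence ${\rm vol}({\rm MTD}_{l,2}) = 2 \cdot {\rm vol}(\Sigma^{(1)})$, and the task reduces to computing ${\rm vol}(\Sigma^{(1)})/{\rm vol}(P)$.

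Next I would translate the origin to the centroid $c = 2^{-(l+1)} e_{+\cdots+}$ of $P$ and pass to an orthonormal basis aligned with the $l+1$ antipodal pairs. A routine sign-counting on the $2^{l+1}$ coordinates $p_{i_0\cdots i_l}$ shows that each of the vectors $v_1 - c$ and $E_{r,1} - c$ takes values $\pm 2^{-(l+1)}$ on exactly $2^l$ of these coordinates, and that any two of their $+$-sets overlap in exactly $2^{l-1}$ coordinates. Consequently the $l+1$ vectors are pairwise orthogonal of common Euclidean norm $R = 2^{-(l+1)/2}$. In the new coordinates, $P$ becomes the standard cross-polytope $\{x \in \R^{l+1} : \|x\|_1 \leq R\}$, and $\Sigma^{(1)}$ becomes ${\rm conv}(\pm R\,e_0, R\,e_1, \ldots, R\,e_l)$.

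The volumes then follow from classical formulas: ${\rm vol}(P) = 2^{l+1} R^{l+1}/(l+1)!$, and a one-line Laplace expansion of the determinant with columns $v_i - v_0$ yields ${\rm vol}(\Sigma^{(1)}) = 2R^{l+1}/(l+1)!$. Doubling and dividing gives ${\rm vol}({\rm MTD}_{l,2})/{\rm vol}(P) = 4/2^{l+1} = 1/2^{l-1}$, as claimed. The only mildly substantive step in this plan is verifying the orthogonality/equal-norm claim that witnesses the regularity of $P$; once that combinatorial fact is in hand, the corollary drops out of the standard simplex and cross-polytope volume formulas.
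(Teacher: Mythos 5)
Your computation is correct, but it takes a genuinely different route from the paper. The paper's proof is a three-line symmetry argument: triangulate the cross-polytope $P$ into the $2^l$ simplices ${\rm conv}\bigl(v_1,v_2,E_{0,s_0},\ldots,E_{l-1,s_{l-1}}\bigr)$, one for each sign pattern $(s_0,\ldots,s_{l-1})\in\{1,2\}^l$; these all contain the diagonal $S$, are congruent (hence of equal volume), and the model consists of exactly two of them (the all-$1$ and all-$2$ patterns), giving the ratio $2/2^l$ with no volume formula ever invoked. You instead compute both volumes explicitly: after centering at $c$ and checking that the $l+1$ difference vectors are pairwise orthogonal of equal norm, you apply the classical formulas ${\rm vol}(P)=2^{l+1}R^{l+1}/(l+1)!$ and ${\rm vol}(\Sigma^{(1)})=2R^{l+1}/(l+1)!$. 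Your sign-counting is right (the $+$-supports have size $2^l$ and pairwise intersections of size $2^{l-1}$, which is exactly the orthogonality condition), and the determinant giving $2R^{l+1}$ is correct, so the argument goes through. Two remarks: first, since only a \emph{ratio} of volumes within one affine subspace is needed, any linear coordinate change preserves it, so the orthonormality verification --- which you rightly identify as the only substantive step --- is dispensable for this corollary (it is really a re-proof of the regularity claim already in Theorem \ref{thm:eins}); second, the paper's triangulation argument generalizes more gracefully (it shows at once that each of the $2^l$ ``sign-pattern'' simplices has the same volume), whereas your computation has the virtue of producing the absolute volumes, not just their ratio.
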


\begin{proof}
We can triangulate the cross-polytope  $P$ into $2^l$ simplices,
all of the same volume and containing the special diagonal $S$. 
The  model ${\rm MTD}_{l,2}$ consists of two of them.
Hence $2/2^l$ is the fraction of the
volume of 
$P = \overline{{\rm MTD}}_{l,2} \cap \Delta_N$ that is occupied by ${\rm MTD}_{l,2}$.
\end{proof}

\section{Likelihood inference}

We next discuss maximum likelihood estimation (MLE) for the
mixture transition distribution model ${\rm MTD}_{l,m}$.
Any data set is represented by a function $\,u : [m]^{l+1} \rightarrow \mathbb{N}\,$
that records the frequency counts of the observed sequences. Given such a function $u$,
our objective is to maximize the corresponding log-likelihood function
\begin{equation}
\label{eq:loglike}
L_u \quad = \quad \sum_{i_0 i_1 \cdots i_l} u_{i_0 i_1 \cdots i_l} \cdot {\rm log}(p_{i_0 i_1 \cdots i_l}) 
\end{equation}
over all probability distributions that lie in the model ${\rm MTD}_{l,m}$.
A standard method for solving this optimization problem is the 
expectation-maximization (EM) algorithm.
Other algorithms for the same task can be found in \cite{Ber, RT}.

A general version of the EM algorithm for algebraic models with discrete data is described in \cite[\S 1.3]{ascb}, 
while the specific case of the MTD model is treated in \cite[\S 4.5]{BR}.
Richards \cite{Ric} conjectured that the EM algorithm for the MTD model
may get stuck in local maxima. Our next result confirms that this is indeed
the case, even for $m=2$.

\begin{proposition}
\label{thm:twolocal}
The log-likelihood function  $L_u$ on the binary model ${\rm MTD}_{l,2}$
has either one or two local maxima.
With probability one, there will be two local maxima, and
both of these will be reached by the EM algorithm
for different  choices of initial parameters.
\end{proposition}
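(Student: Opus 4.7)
The plan is to pull the optimization back to the geometric picture of Theorem \ref{thm:eins}. That theorem identifies ${\rm MTD}_{l,2}$ with a union of two $(l{+}1)$-simplices $T_1, T_2$ meeting along a common edge $S$. Since the log-likelihood function $L_u(p) = \sum_\sigma u_\sigma \log p_\sigma$ is concave in $p$, its restriction to either convex piece $T_i$ is concave, and for generic (full-support) data it is strictly concave and tends to $-\infty$ on the boundary of $T_i$. Hence $L_u|_{T_i}$ has a unique maximizer $\hat p_i$ in the relative interior of $T_i$. Each $\hat p_i$ is automatically a local maximum of $L_u$ on the whole model: if $\hat p_i \notin S$ a small neighborhood in ${\rm MTD}_{l,2}$ is contained in $T_i$, and strict concavity yields a strict local max; while if $\hat p_i \in S$, then concavity on the convex set $T_1 \cup T_2$ (they share a face) forces $\hat p_1 = \hat p_2$ and there is a single local maximum. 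This establishes the dichotomy one/two.

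To upgrade to \emph{exactly} two with probability one, I would argue that the event $\hat p_i \in S$ is equivalent to the KKT optimality conditions for the restricted MLE on $T_i$ being active on the face $S$; this is a non-trivial polynomial condition on the data $u$, hence carves out a Lebesgue-null subset of the data space. For the EM claim I would invoke the standard monotonicity and convergence theory for EM on algebraic discrete models (see \cite[\S 1.3]{ascb}, and the explicit updates in \cite[\S 4.5]{BR}) which guarantees that EM converges to a critical point of $L_u$ on the model. From the proof of Theorem \ref{thm:eins}, the parameter hyperplane $\{a = b\}$ is mapped by $\phi_{l,2}$ onto $S$, while the two open half-spaces $\{a<b\}$ and $\{a>b\}$ are mapped bijectively onto $T_1 \setminus S$ and $T_2 \setminus S$. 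Generically $L_u$ on $S$ is strictly below $L_u(\hat p_i)$, so a likelihood-increasing EM trajectory cannot traverse $S$ from one branch to the other; EM initialized with $a<b$ therefore converges to $\hat p_1$ and EM initialized with $a>b$ converges to $\hat p_2$.

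The main obstacle I foresee is this last containment step: an EM iteration can take an arbitrarily large jump, and one must rule out that it leaps across $S$ into the competing branch. I would address this in one of two ways: either unpack the binary MTD E- and M-step formulas from \cite[\S 4.5]{BR} and check directly that $\operatorname{sign}(a-b)$ is preserved along iterates, or, more robustly, combine strict concavity of $L_u$ on each $T_i$ with monotone increase of the likelihood along the trajectory to confine it, after finitely many steps, to a sub-level set of $L_u$ that lies entirely on one side of $S$, whence convergence to the unique branch maximizer $\hat p_i$ is forced by the concave optimization on $T_i$.
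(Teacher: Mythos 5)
Your overall strategy --- reduce to the two-simplex decomposition of Theorem \ref{thm:eins} and use concavity of $L_u$ on each convex piece --- is the same as the paper's, and it does deliver the ``one or two'' count. But two steps in your first paragraph are wrong. First, $T_1 \cup T_2$ is not a convex set (two $(l{+}1)$-simplices glued along an \emph{edge} are not convex once $l \ge 2$), so you cannot invoke ``concavity on the convex set $T_1\cup T_2$'' to force $\hat p_1 = \hat p_2$ when a maximizer lands on $S$. Second, and more consequentially, $L_u$ does \emph{not} tend to $-\infty$ on the relative boundary of $T_i$: the two endpoints of $S$ together have full support, so any relative-boundary point of $T_i$ carrying positive weight on both of them is a strictly positive distribution. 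Hence the restricted maximizer $\hat p_i$ need not be interior to $T_i$. In fact the paper's analysis shows the opposite of your picture is generic: typically the unconstrained maximizer $p^*$ over the whole cross-polytope $P$ lies in \emph{neither} simplex, and then \emph{both} local maxima sit on the boundaries of the $T_i$ (they are the ML-degree-one Markov-chain estimates with some $\lambda_j = 1$). Your genericity argument, which rules out only the event $\hat p_i \in S$, is the right kind of statement but is attached to an incorrect interiority claim; the paper instead perturbs $p^*$ off $S$ and then splits into the cases $p^* \in T_1\cup T_2$ and $p^* \notin T_1 \cup T_2$.

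On the EM claim you are proving more than the proposition asserts, and the part you flag as the ``main obstacle'' is indeed not closed: nothing in the standard EM monotonicity theory prevents an iterate from jumping across $S$ into the other branch, and your two proposed fixes (sign preservation of $a-b$, or sub-level-set confinement) are left as sketches --- the sub-level sets of $L_u$ on $T_1 \cup T_2$ need not separate the two branches, since $S$ is where the branches meet and $L_u$ is finite there. The statement only requires that both local maxima are \emph{reached for some choices} of initial parameters, and for that the paper's one-line argument suffices: initialize in a small neighborhood of either local maximum; monotone increase of the likelihood plus uniqueness of the maximizer on that simplex then forces convergence to it. So the gap in your last paragraph can simply be removed rather than filled.
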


Here the statement about ``probability one'' in the second sentence refers to any
absolutely continuous probability distribution that is positive on the simplex $\Delta_N$.

\begin{proof}
We saw in Theorem \ref{thm:eins} that ${\rm MTD}_{l,2}$
is the union of two convex polytopes. The log-likelihood function $L_u$
is strictly concave on the ambient simplex $\Delta_N$, so it attains a unique maximum on
each of the two polytopes. This proves the first statement.

For the second statement consider the empirical distribution
$u/|u|$ which is a point in $\Delta_N$. Its log-likelihood function
$L_u$ has a unique maximum $p^*$ in the interior of
the cross-polytope $P$. With probability one, this 
maximum $p^*$ will not lie in the segment $S$, so let
us assume that this is the case. Then
either $p^*$ lies in precisely one of the two $(l+1)$-simplices
that make up ${\rm MTD}_{l,2}$, or $p^*$ does not lie in
${\rm MTD}_{l,2}$. In the former case, $p^*$ is the MLE,
and the maximum over the other simplex is in the boundary of
that simplex and constitutes a second local maximum.
In the latter case, each of the two simplices has a local maximum
in its boundary. When choosing starting parameter values near either of these
local maxima, the EM algorithm converges to that local maximum.
\end{proof}

The point $p^*$ in the cross-polytope $P$ at which $L_u$ attains its maximum
is an algebraic function of the data $u$. The degree of this algebraic function
is the  {\em ML degree} (see \cite{HKS})
of the linear subvariety $\,\overline{{\rm MTD}}_{l,2}\,$ of $\PP^N$.
By Varchenko's Formula  \cite[Theorem 1.5]{ascb}, this ML degree
coincides with the number of bounded regions in an arrangement of hyperplanes.
This arrangement lives   inside the affine space that is
cut out by (\ref{eq:lin1}) and (\ref{eq:lin2})
and it consists of the restrictions of the coordinate hyperplanes  $\{ p_\bullet = 0\}$.

 Computations show that the ML degree equals
$9$ for $l = 3$, and it equals $209$ for $l = 4$.
It would be interesting to find a general formula for that ML degree
as a function of $l$.

The local maxima that occur on the boundary of the two simplices
of ${\rm MTD}_{l,2}$ have ML degree $1$, that is, they are
expressed as rational functions in the data $u$. Indeed,
these local maxima are precisely the estimates for the 
Markov chain obtained by fixing
$\lambda_i = 1$ for some $i$. Hence, if $p^* \not\in {\rm MTD}_{l,2}$,
then the MLE is a rational expression in  $u$.
The next example illustrates the behavior of the EM algorithm
for  $m=2$ and $l = 3$.

\begin{example} \rm
The data consists of eight positive integers, here written as a matrix
$$ U \,\,\, = \,\,\,
\begin{pmatrix}
u_{111} & u_{121} & u_{211} & u_{221} \\
u_{112} & u_{122} & u_{212} & u_{222} 
\end{pmatrix}.
$$
The MLE $\hat p$ will be either 
$$
p' \quad = \quad
\frac{1}{2|u|}
\begin{pmatrix}
u_{111} + u_{211} & u_{121} + u_{221}  & u_{111} + u_{211} & u_{121} + u_{221}  \\
u_{112} + u_{212}  & u_{122} +  u_{222}  & u_{112} + u_{212}  & u_{122} +  u_{222}  
\end{pmatrix}
$$
or
$$
p'' \quad = \quad
\frac{1}{2|u|}
\begin{pmatrix}
u_{111}+u_{121} & u_{111}+u_{121} & u_{211}+u_{221} & u_{211}+u_{221} \\
u_{112}+u_{122} & u_{112}+u_{122} & u_{212}+u_{222} & u_{212}+u_{222} & 
\end{pmatrix},
$$
or it will be the unique probability distribution  satisfying
(\ref{eq:lin1}), (\ref{eq:lin2}), and
\begin{equation}
\label{eq:rank4}
 {\rm rank}  \begin{pmatrix}
u_{111} & u_{112} & u_{121} & u_{122} & u_{211} & u_{212} & u_{221} & u_{222} \\
p_{111} & p_{112} & p_{121} & p_{122} & p_{211} & p_{212} & p_{221} & p_{222} \\
p_{111} & p_{112} & -p_{121} & -p_{122} & 0 &    0  &  0  &   0     \\
  0  &  0  &  0  &   0   &  p_{211} &  p_{212} & -p_{221} &  -p_{222} \\
  0  &  0  &  p_{121} &  p_{122} &  0  &   0 &   -p_{221} & -p_{222} \\
p_{111} &  0  &  -p_{121} & 0  &   -p_{211} & 0  &  p_{221} &   0     
\end{pmatrix} \,\leq \,5 .
\end{equation}
This is the matrix denoted  $\begin{bmatrix} u \\ \tilde{J} \end{bmatrix} $
in \cite[\S 3]{HKS}. The rank constraint (\ref{eq:rank4})
represents Proposition~2 in \cite{HKS}.
The unique probability distribution that lies in our model and also satisfies
  (\ref{eq:rank4}) was  called  $p^*$ in the proof of
Proposition \ref{thm:twolocal}. Its defining constraints 
(\ref{eq:lin1}), (\ref{eq:lin2}) and (\ref{eq:rank4}) form a system of polynomial equations
that has $9$ complex solutions. The distribution $p^*$ is the unique solution
to that system whose coordinates are both real and positive.

The trichotomy in this example is best explained
by the following observations:
For almost all data matrices $U$,  the three points $p', p'', p^*$ are distinct,
one of them coincides with the global maximum $\hat p$ of $L_u$
over ${\rm MTD}_{l,2}$, and another one is a local maximum.
 \qed
\end{example}

It would be interesting to extend the findings in Proposition \ref{thm:twolocal}
to $m \geq 3$. The algebraic tools that may be needed for such an 
analysis are developed in the next section.

\section{Non-linear Models}

In this section we examine the geometry of model ${\rm MTD}_{l,m}$
and the variety $\overline{{\rm MTD}}_{l,m}$ for an arbitrary number $m$
of states. In particular, we prove that its prime ideal 
is minimally generated by linear forms and quadrics. These minimal
generators form a Gr\"obner basis.

\begin{theorem} \label{thm:main}
The variety $\overline{{\rm MTD}}_{l,m}$ spans a linear space
of dimension  $(m-1)(lm-l+1)$ in $\PP^N$. 
In this linear space, its prime ideal is given by
the $2 \times 2$-minors of an $l \times (m-1)^2$-matrix of linear forms.
The linear and quadratic ideal generators form a Gr\"obner basis.
\end{theorem}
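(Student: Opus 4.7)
The plan is to reduce the ideal description of $\overline{\mathrm{MTD}}_{l,m}$, inside its linear span, to the classical presentation of a Segre variety by the $2 \times 2$ minors of a matrix of indeterminates.

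I first pin down the linear span. For each $k \in [m]$ the parametrization (\ref{eq:param}) makes the slice $T^{(k)}_{i_0 \cdots i_{l-1}} := m^l\, p_{i_0 \cdots i_{l-1}\, k}$ additive: $T^{(k)} = \sum_{j=1}^l g_j^{(k)}(i_{j-1})$ with $g_j^{(k)}(i) = \lambda_j q_{i,k}$. The double-difference relations generalizing (\ref{eq:lin2}) cut such ``tropical rank one'' tensors out in dimension $lm - l + 1$, and the $m$ slices are further coupled by the row-sum identities $\sum_k q_{i, k} = 1$; a direct count yields affine dimension $l(m-1)^2 + m$ for the linear span, matching projective dimension $(m-1)(lm - l + 1)$. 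Next, for $j \in [l]$ and $i, k \in [m-1]$ I set
$$
M_{j, (i, k)} \;:=\; m^l\bigl(\, p_{\sigma_j(i)\, k} - p_{m \cdots m\, k}\bigr)
\quad\text{and}\quad
h^{(k)} \;:=\; m^l\, p_{m \cdots m\, k},
$$
where $\sigma_j(i)$ is the length-$l$ sequence with $i$ in position $j - 1$ and $m$'s elsewhere. Evaluating (\ref{eq:param}) gives $M_{j, (i,k)} = \lambda_j (q_{i,k} - q_{m,k})$ and $h^{(k)} = \bigl(\sum_j \lambda_j\bigr)\, q_{m, k}$ on the affine cone, so the $l \times (m-1)^2$ matrix $(M_{j,(i,k)})$ has rank one, and its $\binom{l}{2}\binom{(m-1)^2}{2}$ size-two minors lie in the prime ideal.

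Matching coefficients of the monomials $\lambda_j q_{i,k}$ on the parametrization shows $\{M_{j,(i,k)}\} \cup \{h^{(k)}\}$ is a linearly independent family of $l(m-1)^2 + m$ linear forms, hence a system of coordinates on the affine cone over the span. In these coordinates the variety is the image of $(\lambda, r, h) \mapsto (\lambda_j r_{i, k},\, h^{(k)})$ with $r_{i,k}$ and $h^{(k)}$ free, i.e.\ the direct product of the affine cone over the Segre variety $\PP^{l-1} \times \PP^{(m-1)^2 - 1}$ with $\mathbb{A}^m$. The prime ideal of this product is classically generated by the $2 \times 2$ minors of $(M_{j,(i,k)})$, and pulling back to $K[p_{i_0 \cdots i_l}]$ via the linear forms cutting out the span yields the ideal-theoretic assertion.

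For the Gr\"obner basis statement I would choose a lexicographic term order on the $p$-variables placing every ``redundant'' coordinate (not of the form $p_{\sigma_j(i)\, k}$ or $p_{m \cdots m\, k}$) above every ``distinguished'' one, while inducing a diagonal order on the entries of $M$ inside the distinguished set. Under such an order, each linear form cutting out the span has a distinct redundant coordinate as its leading term, and the $2 \times 2$ minor $M_{j_1, (i_1, k_1)} M_{j_2, (i_2, k_2)} - M_{j_1, (i_2, k_2)} M_{j_2, (i_1, k_1)}$ acquires the diagonal leading monomial $p_{\sigma_{j_1}(i_1)\, k_1}\, p_{\sigma_{j_2}(i_2)\, k_2}$, reproducing the classical diagonal initial ideal of $2 \times 2$ minors of a generic matrix. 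S-pairs then reduce to zero by coprimality of leading terms in the linear-linear and linear-quadratic cases, and by the classical Gr\"obner basis theorem for $2 \times 2$ minors under a diagonal order in the quadratic-quadratic case. The main obstacle is precisely this last step: exhibiting the linear forms cutting out the span in a form whose leading terms are redundant coordinates, so that the triangular elimination structure on the span is compatible with the diagonal initial ideal on the minors inside the large ambient polynomial ring in $m^{l+1}$ variables.
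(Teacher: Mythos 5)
Your proposal is correct and matches the paper's own proof in all essentials: the same linear span computed from the tropical-rank-one and row-sum relations, the same distinguished coordinates $p_{\sigma_j(i)\,k}-p_{m\cdots m\,k}$ assembled into an $l\times(m-1)^2$ matrix specializing to the rank-one product $\lambda_j(q_{i,k}-q_{m,k})$, and the same Gr\"obner basis argument via coprime leading terms for the linear forms and the diagonal initial ideal of $2\times 2$ minors (the paper uses the reverse lexicographic order induced by the lexicographic order on states, which achieves exactly the leading-term structure you identify as the main obstacle). The only cosmetic differences are that the paper drops the column index $r=1$ rather than $k=m$, and it establishes equality of the determinantal locus with $\overline{{\rm MTD}}_{l,m}$ by primality plus a dimension count via identifiability rather than by your direct description of the image as a product with $\mathbb{A}^m$.
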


This theorem explains our earlier result that the model is linear
for binary states. Indeed, for $m=2$,  the dimension $(m-1)m+l-1$ of the model
coincides with the dimension $ (m-1)(lm-l+1)$ of the ambient linear space, and
there are no $2 \times 2$-minors.

\begin{proof}
We shall present an explicit Gr\"obner basis consisting of linear forms and quadrics.
The term order we choose is the reverse lexicographic term order
induced by the lexicographic order on the states $i_0 i_1 \cdots i_l $ of the model.
We first consider the linear relations
\begin{equation}
\label{eq:linrel1} \underline{ p_{i_0 i_1 i_2 \cdots i_{l-1} i_l}}
- \sum_{j=0}^{l-1} p_{m \cdots m i_j m \cdots m i_l}
+ (l-1) p_{m m \cdots mm i_l} .
\end{equation}
This linear form is non-zero and has the underlined leading term
if and only if at least two of the entries of the $l$-tuple
$(i_0,i_1,\ldots,i_{l-1})$ are not equal to $m$.
Thus the number of distinct Gr\"obner basis elements (\ref{eq:linrel1})
equals $\,m^{l+1} - m (1+l(m-1))$.

Our second class of Gr\"obner basis elements consists of the linear relations
\begin{equation}
\label{eq:linrel2}
\begin{matrix}
& \underline{p_{m \cdots m i_j m \cdots m 1}} + 
p_{m \cdots m i_j m \cdots m 2} + \cdots + 
p_{m \cdots m i_j m \cdots m m}  \\
- & 
p_{m \cdots m m m \cdots m 1} -
p_{m \cdots m m m \cdots m 2} - \cdots -
p_{m \cdots m m m \cdots m m} .
\end{matrix}
\end{equation}
These linear forms are non-zero with the underlined
leading term provided $0 \leq j \leq l-1$ and $1 \leq i_j \leq m-1$.
The number of distinct linear forms (\ref{eq:linrel2}) equals $l(m-1)$, and the set of
their leading terms is disjoint  from the set of leading terms in (\ref{eq:linrel1}).

The number of unknowns $p_\bullet$ not yet underlined equals
$\,l(m-1)^2+(m-1) + 1 $. We use these unknowns
to form $m-1$ matrices $A_2,A_3,\ldots,A_{m}$, each having format $l \times (m-1)$,
as follows. Define the matrix $A_r$ by placing the following entry in
row $j$ and column $i_j$:
\begin{equation}
\label{eq:matrixrel1}
\underline{p_{m \cdots m i_j m \cdots m r}} \,-\, p_{m \cdots mmm \cdots m r} .
\end{equation}
We finally form an $l \times (m-1)^2$ matrix by concatenating these $m-1$ matrices:
\begin{equation}
\label{eq:matrixrel2} A \,\, = \,\, \bigl( \,A_2 \, A_3 \, \,\cdots \,\,A_{m} \bigr) .
\end{equation}
The third and last group of polynomials in our Gr\"obner basis 
is the set of $2 \times 2$-minors of $A$. The entries of $A$ have
distinct leading terms, underlined in (\ref{eq:matrixrel1}), and the
leading term of each $2 \times 2$-minor is the product of the
leading terms on the main diagonal.

Note that we could also define the matrix $A_1$ and include it when forming
(\ref{eq:matrixrel2}). This would not change the ideal, but it would lead to
a generating set that is not minimal.

It is well-known that the $ 2 \times 2$-minors of a matrix
of unknowns form a Gr\"obner basis for the prime ideal they generate.
Since no unknown $p_\bullet$ underlined in (\ref{eq:linrel1}) or (\ref{eq:linrel2})
appears in the matrix $A$, it follows that these linear relations
together with the $2 \times 2$-minors of (\ref{eq:matrixrel2})
generate a prime ideal and form a Gr\"obner basis for that prime ideal.

The ideal of $2 \times 2$ minors of $A$ has codimension
$l(m-1)^2 - l - (m-1)^2 + 1$. Subtracting this quantity from
the number $\,l(m-1)^2+(m-1) + 1 \,$ of unknowns not underlined
in (\ref{eq:linrel1}) or (\ref{eq:linrel2}), we obtain
$\, l+(m-1)^2-1 + (m-1) + 1 \, = \,(m-1)m+l$.
This is the dimension of the affine variety defined by
our prime ideal. The corresponding irreducible projective variety 
has dimension $\,(m-1)m+l-1$. This is
precisely the dimension of $\overline{{\rm MTD}}_{l,m}$.

It hence suffices to prove that our variety contains the
model ${\rm MTD}_{l,m}$, or, equivalently, that
the linear forms (\ref{eq:linrel1}) and (\ref{eq:linrel2}) 
are mapped to $0$ by the parameterization (\ref{eq:param}),
and that the specialized matrix $\phi_{l,m}(A)$ has rank $1$.
For (\ref{eq:linrel2}) this is obvious because, for fixed $i_j$, 
$$ \sum_{r=1}^m \phi^*_{l,m} \bigl( p_{m \cdots m i_j m \cdots m r} \bigr)\,\, = \,\,
 \frac{1}{m^l} .$$
 Here $\phi^*_{l,m}$ denotes the homomorphism of polynomial rings
 induced by the map $\phi_{l,m}$.
 
 The indices of the unknowns in the linear form (\ref{eq:linrel1}) all have the same letter $i_l$
 in the end.  The formula (\ref{eq:param}) for the corresponding
 probabilities can thus be written as
$$ \phi^*_{l,m} (p_{i_0 i_1 \cdots i_{i-1} i_l})  \,\, = \,\,
u + x_{i_0} + y_{i_1} + \cdots + z_{i_{l-1}}. $$
In other words, for any fixed $i_l$, the resulting
$l$-dimensional tensor  has tropical rank $1$.
This representation implies linear relations like
(\ref{eq:lin2}), and these are equivalent to (\ref{eq:linrel1}).

Finally, if we apply our ring homomorphism to (\ref{eq:matrixrel1}) then we get
\begin{equation}
\label{eq:getlambda}
 \phi_{l,m}^*(p_{m \cdots m i_j m \cdots m r}) \,-\, \phi_{l,m}^*(p_{m \cdots mmm \cdots m r})
\,\, = \,\,\lambda_{j} \cdot (q_{i_j,r} -q_{m,r}). 
\end{equation}
Thus, the matrix $\phi_{k,l}(A)$ is the product of the column vector
$(\lambda_1,\ldots,\lambda_l)$ and a row vector
of length $(m-1)^2$ whose entries are
$q_{i_j,r}-q_{m,r}$ for $2 \leq r \leq m$ and $1 \leq i_j \leq m-1$.
In particular, the matrix $\phi_{l,m}^*(A)$ has rank $\leq 1$.
This completes the proof of Theorem~\ref{thm:main}.
\end{proof}

\begin{remark} \rm
The prime ideal in Theorem \ref{thm:main} is the kernel of $\phi_{l,m}^*$, so it
 characterizes the image
of the model parametrization $\phi_{l,m}$. On the model ${\rm MTD}_{l,m}$, the map $\phi_{l,m}$ can be
inverted as long as the rows of the transition matrix $(q_{ij})$ are distinct.
Indeed, $q_{ij} $ equals $2^l  \phi^*_{l,m}(p_{ii\cdots iij})$, and
the coordinates of $\lambda$ are identified from (\ref{eq:getlambda}).
Thus, our result refines the well-known
fact that MTD models are identifiable \cite[\S 4.2]{BR}.
\end{remark}

\begin{example} \label{ex:neun} \rm
We illustrate Theorem \ref{thm:main} for the case $l=2, m=3$, 
by presenting the Gr\"obner basis promised in Example~\ref{ex23}.
Note that $N = 26$.
Here the ambient linear space has dimension
$(m-1)(lm-l+1)= 10$, and our Gr\"obner basis for that linear space
consists of twelve linear forms (\ref{eq:linrel1}) 
and four linear forms (\ref{eq:linrel2}).
These are respectively,
$$ \begin{matrix}
\underline{p_{111}} {-} p_{311} {-} p_{131} {+} p_{331} ,\,
\underline{p_{121}} {-} p_{321} {-} p_{131} {+} p_{331} ,\,
\underline{p_{211}} {-} p_{311} {-} p_{231} {+} p_{331} , \,
\underline{p_{221}} {-} p_{321} {-} p_{231} {+} p_{331} ,\\
\underline{p_{112}} {-} p_{312} {-} p_{132} {+} p_{332} , \,
\underline{p_{122}} {-} p_{322} {-} p_{132} {+} p_{332} ,\,
\underline{p_{212}} {-} p_{312} {-} p_{232} {+} p_{332} , \,
\underline{p_{222}} {-} p_{322} {-} p_{232} {+} p_{332} ,\\
\underline{p_{113}}{-} p_{313} {-} p_{133} {+} p_{333} ,  \,
\underline{p_{123}} {-} p_{323} {-} p_{133} {+} p_{333} ,\,
\underline{p_{213}} {-} p_{313} {-} p_{233} {+} p_{333} , \,
\underline{p_{223}} {-} p_{323} {-} p_{233} {+} p_{333}.
\end{matrix}
$$
$$ \begin{matrix} {\rm and} \quad &
\underline{p_{311}} +p_{312}+p_{313}-p_{331}-p_{332}-p_{333}\,,\,\,
\underline{p_{321}} +p_{322}+p_{323}-p_{331}-p_{332}-p_{333} \,,\\ &
\underline{p_{131}} +p_{132}+p_{133}-p_{331}-p_{332}-p_{333} \,,\,\, 
\underline{p_{231}} +p_{232}+p_{233}-p_{331}-p_{332}-p_{333}.
\end{matrix}
$$
The remaining $l(m-1)^2 + (m-1) + 1 =8 + 2 + 1 =11$ not yet underlined 
unknowns are
$\,p_{132}, p_{232}, p_{312}, p_{322},
p_{133}, p_{233}, p_{313}, p_{323}, \,
p_{332}, p_{333},\,p_{331}$.
These represent coordinates on  the linear subspace $\PP^{10}$ of $\PP^{26}$ 
that is cut out by these linear forms. Inside that linear subspace $\PP^{10}$, our
variety $\overline{{\rm MTD}}_{2,3} $ has codimension $3$, and it is
defined ideal-theoretically by the $2 \times 2$-minors of the $2 \times 4$-matrix
$$ A \quad = \quad \bigl(\,A_2 \,\,A_3 \,\bigr) \,\, = \,\,
\begin{pmatrix}
\, \underline{p_{132}}-p_{332} & \underline{p_{232}}-p_{332} \,
&\, \, \underline{p_{133}}-p_{333} & \underline{p_{233}}-p_{333} \,\\
\, \underline{p_{312}}-p_{332} & \underline{p_{322}}-p_{332} \,&\, \, 
\underline{p_{313}}-p_{333} & \underline{p_{323}}-p_{333}\,
 \end{pmatrix}.
 $$
These six quadrics, together with the
 $16$ linear forms, form a reduced Gr\"obner basis. \qed
\end{example}

Our proof of   Theorem \ref{thm:main}
gives rise to the following geometric description:

\begin{corollary} \label{cor:segre}
The projective variety $\overline{{\rm MTD}}_{l,m} $ 
is a cone with base $\PP^{m-1}$ over the Segre variety
$\PP^{l-1} \times \PP^{m^2-2m}$. If $m \geq 3$, then
this variety is singular and its singular locus is the $\PP^{m-1}$
that forms the base of that cone.
 The degree of $\overline{{\rm MTD}}_{l,m} $ equals $\binom{l+(m-1)^2-2}{l-1}$.
\end{corollary}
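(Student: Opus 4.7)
The plan is to perform a linear change of coordinates that exposes the cone-over-Segre structure, and then invoke standard facts about Segre varieties and cones. The Gr\"obner basis from the proof of Theorem~\ref{thm:main} has already placed $\overline{{\rm MTD}}_{l,m}$ inside its linear span $\PP^{(m-1)(lm-l+1)}$, coordinatized by the $l(m-1)^2+m$ unknowns $p_\bullet$ that survive the linear forms~(\ref{eq:linrel1})--(\ref{eq:linrel2}). I would reparameterize these by
$$y_{j,i,r}\,:=\,p_{m\cdots m\,i\,m\cdots m\,r}-p_{m\cdots m\,r},\qquad z_r\,:=\,p_{m\cdots m\,r},$$
with $j,i,r$ ranging as in~(\ref{eq:matrixrel1}) and $r=1,\ldots,m$ for the $z$'s. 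In these coordinates the matrix $A$ of~(\ref{eq:matrixrel2}) becomes an $l\times(m-1)^2$ matrix $Y$ whose entries are the distinct, linearly independent coordinates $y_{j,i,r}$, while the $z_r$'s appear in none of its $2\times 2$ minors.

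I would then identify the variety cut out by the $2\times 2$ minors of such a generic $l\times(m-1)^2$ matrix with the Segre embedding of $\PP^{l-1}\times\PP^{(m-1)^2-1}=\PP^{l-1}\times\PP^{m^2-2m}$. Because the $z_r$-coordinates span a disjoint linear subspace $L\cong\PP^{m-1}$ on which all $2\times 2$ minors vanish identically, the full variety $\overline{{\rm MTD}}_{l,m}$ is the projective join of $L$ with this Segre variety, which is the cone claimed. The degree statement then follows from two standard facts: the degree of a cone with disjoint linear vertex over a base $Y$ equals $\deg(Y)$, and the Segre embedding of $\PP^a\times\PP^b$ has degree $\binom{a+b}{a}$. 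Specializing to $a=l-1$ and $b=m^2-2m$ yields $\binom{l+(m-1)^2-2}{l-1}$.

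The singularity assertion is where I expect the main obstacle. For $m\ge 3$ we have $(m-1)^2\ge 4$, so the Segre base is a smooth, genuinely non-linear projective subvariety of its ambient $\PP^{l(m-1)^2-1}$. I would invoke the principle that the projective cone over a smooth, non-linear $Y\subset\PP^N$ with linear vertex $L$ is smooth precisely off $L$, with singular locus exactly $L$. A direct verification is Jacobian: at any point of $L$ every partial derivative of a defining $2\times 2$ minor vanishes, since each minor is homogeneous quadratic in the $y$-variables alone, so $L$ is contained in the singular locus; conversely, away from $L$ smoothness of the Segre lifts along the projection that forgets the $z$-coordinates. Carrying out this Jacobian bookkeeping cleanly, and verifying that the non-linearity hypothesis is sharp at $m\ge 3$, is the delicate part; everything else is direct from Theorem~\ref{thm:main}.
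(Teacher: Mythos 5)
Your proposal is correct and follows essentially the same route as the paper: both identify the variety as the join of the vertex $\PP^{m-1}$ (the locus where all entries of $A$ vanish, i.e.\ where $p_{i_0\cdots i_{l-1}r}$ depends only on $r$) with the Segre variety $\PP^{l-1}\times\PP^{m^2-2m}$ cut out by the $2\times 2$ minors of a generic $l\times(m-1)^2$ matrix, and both read off the degree from the standard formula $\binom{a+b}{a}$ for a Segre product. Your writeup merely makes explicit the coordinate change and the Jacobian verification that the paper compresses into the single assertion that the singular-locus ideal is generated by the entries of $A$ together with the linear forms (\ref{eq:linrel1}) and (\ref{eq:linrel2}).
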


\begin{proof}
The ideal of singular locus of 
$\overline{{\rm MTD}}_{l,m} $ is generated by
the entries of the matrix $A$ together with the
linear forms (\ref{eq:linrel1}) and (\ref{eq:linrel2}).
Together, these linear equations are equivalent to
requiring that the value of $\,p_{i_0 i_1 \cdots i_{l-1} r} \,$
depends only on $r$. It does not on
$i_0 i_1 \cdots i_{l-1}$. These constraints
define a linear space $\PP^{m-1}$ in $\PP^N$.
The $2 \times 2$-minors of an $l \times (m{-}1)^2$ matrix
define the Segre variety $\PP^{l-1} \times \PP^{m^2-2m}$,
 whose degree is known 
to be the  binomial coefficient.
\end{proof}


\begin{thebibliography}{10}

\bibitem{Ber} A.~Berchtold:
Estimation in the mixture transition distribution model,
{\em J. Time Ser. Anal.} {\bf 22} (2001) 379--397. 


\bibitem{BR}
A.~Berchtold and A.~Raftery:
The mixture transition distribution model for high-order
Markov chains and non-Gaussian time series,
{\em Statistical Science} {\bf 17} (2002) 328--356.

\bibitem{Cri} A.~Critch:
Binary hidden Markov models and varieties,
{\tt arXiv:1206.0500}.

\bibitem{LAS} M.~Drton, B.~Sturmfels and S.~Sullivant:
{\em Lectures on Algebraic Statistics}, Oberwolfach Seminars {\bf 39},
Birkh\"auser Verlag, Basel, 2009.


\bibitem{HT} 
H.~Hara and A.~Takemura: A Markov basis for two-state toric homogeneous Markov chain model without initial parameters, {\em J. Japan Statist. Soc.} {\bf 41} (2011) 33--49.

\bibitem{HDY}
D.~Haws, A.~Martin Del Campo and R.~Yoshida:
Degree bounds for a minimal Markov basis for the three-state toric homogeneous Markov chain model,
in T.~Hibi: {\em Harmony of Gr\"obner Bases and the Modern Industrial Society},  2012, pp.~63--98.

\bibitem{HKS}
S.~Ho\c sten, A.~Khetan and B.~Sturmfels:
Solving the likelihood equations,
{\em Foundations of Computational Mathematics} {\bf 5} (2005) 389--407.

\bibitem{ascb}
L.~Pachter and B.~Sturmfels:
{\em Algebraic Statistics for Computational Biology},
Cambridge University Press, 2005.

\bibitem{Raf} A.~Raftery:
A model for high-order Markov chains,
{\em J. Roy. Statist. Soc. Ser. B} {\bf 47} (1985) 528--539. 

\bibitem{RT} A.~Raftery and S.~Taver\'e: Estimation and modelling
repeated patterns in high order Markov chains with the Mixture Transition
Distribution Model, {\em Applied Statistics} (1994) 179--199.

\bibitem{Ric}
D.~Richards:
Counting and locating the solutions of polynomial systems
of ML equations, presentation at the International
Workshop in Applied Probability, University of Connecticut, 2006.

\bibitem{Sch} A.~Sch\"onhuth: 
Generic identification of binary-valued hidden Markov processes,
{\tt  arXiv:1101.3712}.

\bibitem{Zie} G.~Ziegler:
{\em Lectures on Polytopes},
Graduate Texts in Mathematics, 152,
Springer-Verlag, New York, 1995.

\end{thebibliography}
\end{document}